\documentclass[12pt]{amsart}
\usepackage{preamble_2025-05-22}

\begin{document}

\title{Trianguline representations and locally analytic principal series of $\GL_2(\Qp)$}

\author{Matthias Strauch}
\address{Indiana University, Department of Mathematics, Rawles Hall, Bloomington, IN 47405, U.S.A.}
\email{mstrauch@iu.edu}

\author{Zichuan Wang}
\address{Indiana University, Department of Mathematics, Rawles Hall, Bloomington, IN 47405, U.S.A.}
\email{wangzich@iu.edu}

\begin{abstract}
Let $\rho$ be an absolutely irreducible 2-dimensional $p$-adic representation
of the absolute Galois group of $\Qp$, and let $\Pi(\rho)$ be the unitary Banach space representation of $G = \GL_2(\Qp)$ associated to $\rho$ by the $p$-adic Langlands correspondence. We deduce from results due to Colmez, Dospinescu, Pa\v{s}k\={u}nas, Emerton, and others that if the locally analytic representation $\Pi(\rho)^\la$ has a subquotient isomorphic to a non-zero subquotient of a locally analytic principal series representation of $G$, then $\rho$ is trianguline.
\end{abstract}

\maketitle

\tableofcontents

\section{Introduction}
Let $E/\Qp$ be a finite extension and $\rho: \sG_\Qp \ra \GL_2(E)$ an absolutely irreducible 2-dimensional continuous representation of the absolute Galois group $\sG_\Qp$ of $\Qp$. Let $\Pi(\rho)$ be the unitary Banach space representation of $G = \GL_2(\Qp)$ attached to $\rho$ by the $p$-adic Langlands correspondence, cf. \cite{Colmez10,ColmezDospinescuPaskunas14}. Let $\Pi(\rho)^\la$ be the locally analytic representation associated to $\Pi(\rho)$. $\rho$ is called trianguline if its $(\vphi,\Gamma)$-module $D_\rig(\rho)$ over the Robba ring $\sR$ is an extension of $(\vphi,\Gamma)$-modules of rank 1, i.e., there is an exact sequence

\[0 \lra \sR(\delta_1) \lra D_\rig(\rho) \lra \sR(\delta_2) \lra 0 \;,\]

\vskip8pt

where $\delta_1, \delta_2: \Qpx \ra E^\x$ are continuous (hence locally analytic) characters, cf. \cite[0.3]{Colmez_LAvec} for the notation.\footnote{We use the term trianguline for what is called {\it split trianguline} in \cite{Nakamura_Classification, Berger_Trianguline}.} Let $T$ (resp. $B$) be the group of diagonal matrices (resp. upper triangular matrices) in  $G$. Given a locally analytic character $\chi: T \ra E^\x$, we denote its inflation to $B$ again by $\chi$ and call the induced representation 

\[\Ind^G_B(\chi) = \{ f: G \ra E \midc f \mbox{ is locally analytic}, \forall g \in G, b \in B: f(bg) = \chi(b)f(g) \,\} \;,\]

\vskip8pt

a {\it locally analytic principal series representation}. This is an admissible representation (in the sense of \cite{ST_inventiones}) of length at most three, all of whose irreducible subquotients\footnote{Throughout this paper, {\it subrepresentation} refers to a closed subrepresentation, and a {\it subquotient} is the quotient of a closed subrepresentation by a closed subrepresentation. A representation is called irreducible if it is non-zero and has no non-zero proper closed subrepresentation. Similarly, the (Jordan-H\"older) length of a representation refers to the length of a filtration by closed subrepresentations with topologically irreducible quotients. This is a well-defined concept independent of the choice of such a filtration.} are well understood, cf. \ref{JH-princseries}.  We call an irreducible subquotient of a locally analytic principal series representation of $G$ a representation of {\it principal series type}. 

\vskip8pt

The ``if'' part of the following statement is well known, cf. \cite{Colmez_LAvec,LiuXieZhang,Liu_l-a-vectors-crystabelian}. Similarly, the ``only if'' part seems to be well known to experts, cf. \cite[sec. 3.2.6]{EmertonGeeHellmann}, but has apparently not been completely documented, which is the rationale for this note. 

\vskip8pt

{\bf Theorem.} {\it Let $\rho$ be as above. Then $\Pi(\rho)^\la$ has a subquotient which is of principal series type if and only if $\rho$ is trianguline.}

\vskip8pt

We sketch here the arguments for the ``only if'' direction. Absolutely irreducible non-trianguline representations $\rho$ can be divided into two classes:

\vskip8pt

(1) When no twist of $\rho$ is de Rham with distinct Hodge-Tate weights.

\vskip8pt

(2) When there is a continuous character $\vep: \sG_\Qp \ra E^\x$ such that $\vep \ot \rho$ is de Rham with distinct Hodge-Tate weights.

\vskip8pt

In \cite[Cor. 0.4]{Colmez_poids}, Colmez determines the Jordan-H\"older length of $\Pi(\rho)^\la$ for any $\rho$. For non-trianguline $\rho$ we have: if $\rho$ is of type (1), then $\Pi(\rho)^\la$ is irreducible, and if $\rho$ is of type (2), then $\Pi(\rho)^\la$ has length two. 

\vskip8pt

{\it Representations of type (1).} Suppose that $\rho$ is of type (1) and, by way of contradiction, assume $\Pi(\rho)^\la$ to be of principal series type. This representation is then isomorphic to an irreducible locally analytic principal series $B^\an(\delta_1,\delta_2) = \Ind^G_B(\delta_2 \ot \delta_1 \chi_\cyc^{-1})$, where $\chi_\cyc(x) = x|x|$, using the notation of \cite{Colmez_LAvec}. Therefore, this locally analytic representation embeds into $\Pi(\rho)$, and can thus be equipped with a $G$-invariant norm. By a result of Emerton \cite[0.3]{EmertonLfunctions}, one has $|\delta_1(p)| \le 1$. 

\vskip8pt

If $|\delta_1(p)| = 1$, then the inducing character is unitary and the map $B^\an(\delta_1,\delta_2) \hra B^\cont(\delta_1,\delta_2)$ into the continuous induction $B^\cont(\delta_1,\delta_2)$ extends to a map $\alpha: \Pi(\rho) \ra B^\cont(\delta_1,\delta_2)$, because $\Pi(\rho)$ is the universal unitary completion of $\Pi(\rho)^\la \simeq B^\an(\delta_1,\delta_2)$. $\alpha$ is easily seen to be an isomorphism. However, this is impossible as the image of the functor $\Pi$ does not contain so-called ordinary unitary Banach space representations \cite[1.1]{ColmezDospinescuPaskunas14}, \cite[1.3]{Paskunas_Montreal}. 

\vskip8pt

If $|\delta_1(p)| < 1$, then $s = (\delta_1,\delta_2,\infty)$ is a point on the trianguline variety $\sS_\irr$, and $B^\an(\delta_1,\delta_2)$ embeds into $\Pi(V(s))$, where $V(s)$ is the Galois representation attached to $s$. This embedding extends to a map $\Pi(\rho) \ra \Pi(V(s))$ which can be shown to be an isomorphism. Hence $\rho \simeq V(s)$ is trianguline, contradicting our assumption.

\vskip8pt

{\it Representations of type (2).} We may assume without loss of generality that $\rho$ is de Rham, and we suppose in this introduction that the associated Weil group representation $\WD(D_\pst(\rho))$ is absolutely irreducible.\footnote{See case (2b) in the proof of \ref{main-result} for the case of irreducible but not absolutely irreducible $\WD(D_\pst(\rho))$.} Using the change of weights arguments of \cite{Colmez_poids}, one can reduce to the case when $\rho$ is de Rham and has Hodge-Tate weights 0 and 1. In this situation Dospinescu and Le Bras have shown in \cite[1.4]{DospinescuLeBras} that $\Pi(\rho)^\la$ sits in an exact sequence 

\[0 \lra \pi := \LL\left(\WD(D_\pst(\rho))\right) \lra \Pi(\rho)^\la \lra \left(H^0(\Sigma_n,\cO)^\psi\right)'_b \lra 0\]

\vskip8pt

where $\LL$ denotes the local Langlands correspondence\footnote{Associating smooth representations to 2-dimensional Weil-Deligne representations, normalized as in \cite[VI.6 \S 11, p. 465] {Colmez10}.}, $\Sigma_n$ is a Drinfeld covering space of the $p$-adic upper half plane, and $\psi = {\rm JL}(\pi)$ is the representation of $D^\x$ corresponding to $\pi$ by the local Jacquet-Langlands correspondence, where $D$ is the quaternion division algebra over $\Qp$. The notation $(-)'_b$ on the right denotes the continuous dual space (with the strong topology). As $\pi$ is not of principal series type, it thus remains to show that the representation on the right is not of principal series type. For this it suffices to show that the naive Jacquet module of that representation vanishes. Equivalently, we must show that $H^0(\Sigma_n,\cO)^\psi$ has vanishing $U$-invariants, where $U \sub G$ is the upper triangular unipotent subgroup. It is not hard to see that $U$-invariant rigid analytic functions on $\Sigma_n$ are locally constant. The action of $D^\x$ on the (geometrically) connected components is known to factor through the norm $D^\x \ra \Qpx$. As $\psi$ is not one-dimensional, the space of $U$-invariant vectors in $H^0(\Sigma_n,\cO)^\psi$ is zero.

\vskip8pt

{\it Further remarks.} In \cite[Appendix,Remark 2]{ST_UgFinite} the question had been raised whether all absolutely irreducible admissible locally analytic representations are subquotients of locally analytic parabolic inductions of locally finite-dimensional representations of Levi subgroups.\footnote{The condition imposed in loc.cit. is that the strong dual is topologically simple and analytic, but this is equivalent to it being topologically simple and coadmissible. Moreover, in this remark we only consider absolutely irreducible representations in order to avoid rationality issues.} For $G = \GL_2(\Qp)$ this would mean that every absolutely irreducible admissible locally analytic representation is (a) of the form
$(\lambda \circ \det) \ot \Sym^k \ot \pi$ with a locally analytic character $\lambda$ and a smooth absolutely irreducible representation $\pi$, or is (b) an irreducible locally analytic principal series representation. As we show here, this is not the case for $\Pi(\rho)^\la$ when $\rho$ is of type (1) and stays non-trianguline after any base change. Moreover, for de Rham representations $\rho$ of type (2) with absolutely irreducible $\WD(D_\pst(\rho))$ the quotient $\Pi(\rho)^\la/\Pi(\rho)^\lalg$ is not a representation of the form (a) or (b). For such $\rho$ the representation $\Pi(\rho)^\la$ is nevertheless somewhat accessible as it has a geometric description in terms of the Drinfeld tower. For non-trianguline $\rho$ which are not twists of de Rham representations, the representations $\Pi(\rho)^\la$ have not yet been explicitly described. 

\vskip8pt

{\it Acknowledgments.} We thank Zijian Yao for raising the question of whether the result discussed here had already been completely documented in the existing literature.

\section{Jordan-H\"older factors of some locally analytic representations of \texorpdfstring{$\GL_2(\Qp)$}{}}

Throughout this paper $E/\Qp$ denotes a finite field extension. The purpose of this section is to gather some facts for later reference. The results in sections \ref{principal-series} - \ref{Colmez} are already known and are restated here for convenient reference in the proof of \cref{main-result}.

\subsection{Locally analytic principal series representations of \texorpdfstring{$\GL_2(F)$}{}}\label{principal-series}

In this subsection only, we let $F$ be an intermediate field $\Qp \sub F \sub E$, and consider locally $F$-analytic representations of $\GL_2(F)$ on $E$-vector spaces.

\vskip8pt

\begin{prop}\label{JH-princseries} Let $W$ be an irreducible subquotient of a locally $F$-analytic principal series representation of $G = \GL_2(F)$ over $E$. Then $W$ is isomorphic to a representation of exactly one of the following types:

\vskip8pt

\begin{enumerate}
\item  $(\lambda \circ \det) \otimes \Sym^k(E^2) \otimes \pi$, where $\lambda: \Fx \ra \Ex$ is a locally $F$-analytic character, and $\pi$ is an irreducible smooth representation of $G$. The latter is either the trivial one-dimensional representation, the Steinberg representation, or a smooth principal series representation.
        
\vskip5pt

\item An irreducible locally $F$-analytic principal series representation.
\end{enumerate}

\vskip8pt

Moreover, the underlying vector space of $W$ has countable (possibly finite) dimension over $E$ if and only if $W$ is of type (1).
\end{prop}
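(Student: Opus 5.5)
The plan is to read off the Jordan--H\"older factors of $\Ind^G_B(\chi)$ from the standard analysis of locally analytic principal series, essentially as in Schneider--Teitelbaum or Kohlhaase--Schraen, which is known in this setting. Write $\chi = \chi_1 \otimes \chi_2$ and set $\mathfrak{t}=\mathrm{Lie}(T)$. Let $d\chi \in \mathfrak{t}^*_E$ be its derivative, viewed as a weight relative to $B$. Everything is organized around the single question of whether $d\chi$ is dominant integral.

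\textbf{Step 1 (non-integral case).} If $d\chi$ is not dominant integral, I show that $\Ind^G_B(\chi)$ is topologically irreducible; this is the type (2) case. To do this, dualize: the strong dual is a coadmissible module over $D(G,E)$, and it is generated by $D(\mathfrak{g},B)\otimes_{D(B)}\chi^{-1}$, which is built from the Verma module $U(\mathfrak{g})\otimes_{U(\mathfrak b)} d\chi$. Orlik--Strauch irreducibility then applies, since the Verma module is simple when the weight is antidominant and non-integral. \emph{Alternatively}, I would use Frommer's/Kohlhaase--Schraen's Bruhat-filtration argument, under which $\Ind^G_B(\chi)$ is irreducible precisely when it has no locally algebraic subrepresentation.

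\textbf{Step 2 (integral case).} If $d\chi$ is dominant integral of highest weight $k$ up to a determinant twist, write $\chi=\chi^{\mathrm{alg}}\cdot\chi^{\mathrm{sm}}\cdot(\lambda\circ\det)$. There is an exact sequence
\[0\to \Ind^G_B(\chi)^{\mathrm{lalg}} \to \Ind^G_B(\chi) \to \Ind^G_B(\chi')\to 0,\]
where $\Ind^G_B(\chi)^{\mathrm{lalg}} \cong (\lambda\circ\det)\otimes\Sym^k(E^2)\otimes \mathrm{Ind}^{\mathrm{sm}}(\chi^{\mathrm{sm}})$ and $\chi' = s\cdot\chi$ is the dot-action conjugate, whose derivative is not dominant. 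The cokernel arises through the intertwining map induced by the Verma embedding $M(s\cdot\mu)\hookrightarrow M(\mu)$. By Step 1, $\Ind^G_B(\chi')$ is irreducible of type (2). The smooth principal series $\mathrm{Ind}^{\mathrm{sm}}$ has length at most $2$, with constituents that are characters, Steinberg twists, or an irreducible smooth principal series. After absorbing characters of $\det$ into $\lambda$, this gives exactly the list in (1). Tensoring with the irreducible algebraic $\Sym^k$ preserves irreducibility; this is the standard fact that $\sigma\otimes\pi$ is irreducible for $\sigma$ algebraic irreducible and $\pi$ smooth irreducible (Prasad). Together these give total length at most three.

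\textbf{Step 3 (uniqueness of type and the dimension criterion).} A representation of type (1) has countable dimension, because smooth admissible irreducible representations of $\GL_2(F)$ have countable dimension. A type (2) representation $\Ind^G_B(\chi)$ contains a space of locally analytic functions on an open compact subset of $U\cong F$, which has uncountable dimension. Hence the two types are disjoint, and the dimension criterion follows at once.

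The main obstacle is the irreducibility in Step 1, together with identifying the cokernel in Step 2 as $\Ind^G_B(s\cdot\chi)$. For both I would cite Orlik--Strauch's theorem on the functors $\mathcal{F}^G_B$, or Kohlhaase--Schraen for $\GL_2$, rather than reprove them.
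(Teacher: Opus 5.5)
Your proposal follows the same route as the paper. You split according to whether $c(\chi)\in\mathbb{Z}_{\ge 0}$, citing Kisin--Strauch/Orlik--Strauch for irreducibility otherwise. In the integral case you apply the exact functor $\mathcal{F}^G_B$ to $0\to M(s\cdot\mu)\to M(\mu)\to L(\mu)\to 0$, identify the sub via the $P$-$Q$ formula as a determinant twist of $\mathrm{Sym}^k(E^2)\otimes\mathrm{ind}^G_B(\cdot)$ and the quotient as an irreducible principal series, and separate the two types by countable versus uncountable dimension. The only cosmetic difference is that you use Prasad's theorem where the paper uses Orlik--Strauch 5.8 for $\mathcal{F}^G_G$.

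Two slips need fixing. First, with the paper's convention $f(bg)=\chi(b)f(g)$, it is $-d\chi$ (equivalently $c(\chi)\in\mathbb{Z}_{\ge 0}$) that must be dominant integral for $B$, not $d\chi$. Second, Step 1 needs simplicity of the Verma module for every weight that is not dominant integral, not only for non-integral ones. This matters because in Step 2 you apply Step 1 to $\chi'$, whose weight is integral.
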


\begin{proof} Let $T \sub G$ be the maximal torus of diagonal matrices. Let $\chi: T \ra \Ex$ be a locally analytic character, and define $c(\chi) \in E$ by the equation 
$\chi(\diag(t,t^{-1})) = t^{-c(\chi)}$ for $t \in F$ sufficiently close to 1. If $c(\chi) \notin \bbZ_{\ge 0}$, then the locally analytic induction $\Ind^G_B(\chi)$, where $B$ is the Borel subgroup of upper triangular matrices in $G$, is topologically irreducible, cf. \cite[3.1.6]{KisinStrauch06}, \cite[4.2.2]{OrlikStrauch10}. Then we are in case (2) of the statement.

\vskip8pt

Now we assume $k := c(\chi) \in \Z_{\ge 0}$. Write $\chi(\diag(a,d)) = \chi_1(a)\chi_2(d) = \frac{\chi_1}{\chi_2}(a) \chi_2(ad)$ with locally analytic characters $\chi_1, \chi_2: \Fx \ra \Ex$. Then an easy computation shows that $k = -\left(\frac{\chi_1}{\chi_2}\right)'(1)$. Hence $\frac{\chi_1}{\chi_2}(a) = a^{-k} \tau(a)$ with a smooth character $\tau: \Fx \ra \Ex$. 

\vskip8pt

Set $\vep_1(\diag(a,d)) = a$, so that $\chi = (\chi_2 \c \det) \cdot \vep_1^{-k} \cdot (\tau \c \vep_1)$. It follows that $\Ind^G_B(\chi) = (\chi_2 \c \det) \ot \Ind^G_B(\vep_1^{-k} \cdot (\tau \c \vep_1))$. Moreover, we have 

\[\Ind^G_B(\vep_1^{-k} \cdot (\tau\c \vep_1)) = \cF^G_B(M(k\rmd \vep_1),\tau \c \vep_1) \;,\]

\vskip8pt

where $\cF^G_B(-,-)$ is the bi-functor introduced in \cite[4.6.2]{OrlikStrauchJH}, and $M(k\rmd \vep_1)$ is the Verma module associated to the dominant integral weight $k\rmd \vep_1$. This Verma module has the finite-dimensional irreducible representation $L(k\rmd \vep_1) \simeq \Sym^k(E^2)$ as a quotient, and we have an exact sequence

\[0 \lra M(k\rmd\vep_1 - (k+1)\rmd\alpha) \lra M(k\rmd \vep_1) \lra L(k\rmd \vep_1) \lra 0 \;.\]

Here the Verma module on the left is the submodule of $M(k\rmd \vep_1)$ generated by $y^{k+1}.v^+$, where $v^+$ is a generating vector of $M(k\rmd \vep_1)$ and $y = \begin{pmatrix}
    0 & 0 \\ 1 & 0 
\end{pmatrix}$, and $y^{k+1}.v^+$ has therefore weight $k\rmd \vep_1 - (k+1)\rmd \alpha$. The Verma module on the left is irreducible (already as a module over $\frs\frl_2$). By the exactness of $\cF^G_B$ \cite[4.9 (i)]{OrlikStrauchJH} one has an exact sequence 

\[0 \lra \cF^G_B(L(k\rmd \vep_1), \tau \c \vep_1) \lra \cF^G_B(M(k\rmd \vep_1), \tau \c \vep_1)  \lra \cF^G_B(M(k\rmd\vep_1 - (k+1)\rmd\alpha), \tau \c \vep_1) \lra   0 \;.\]

\vskip8pt

The representation on the right is irreducible by \cite[5.8]{OrlikStrauchJH}. Moreover, the ``$P$-$Q$ formula'' \cite[4.9 (ii)]{OrlikStrauchJH} gives 

\[\cF^G_B(L(k\rmd \vep_1), \tau \c \vep_1) \cong \cF^G_G\Big(L(k\rmd \vep_1), \ind^G_B(\tau \c \vep_1)\Big) \cong \Sym^k(E^2) \ot_E \ind^G_B(\tau \c \vep_1) \;,\]

\vskip8pt

where $\ind$ denotes smooth induction. Using \cite[5.8]{OrlikStrauchJH} again, we see that this representation is irreducible if the smooth representation $\ind^G_B(\tau \c \vep_1)$ is irreducible. If the latter is reducible, then it has a Jordan-H\"older filtration of length two whose subquotients are a one-dimensional and an infinite-dimensional representation isomorphic to a twist of the Steinberg representation $\St$. Applying once more the exactness of $\cF^G_G$ and \cite[5.8]{OrlikStrauchJH}, we conclude that when $\ind^G_B(\tau \c \vep_1)$ is reducible, the representation $\cF^G_B(L(k\rmd \vep_1), \tau \c \vep_1)$ has (topological) Jordan-H\"older length 2, and the two irreducible subquotients are of the form $(\tilde{\tau} \c \det) \ot_E \Sym^k(E^2)$ and $(\tilde{\tau} \c \det) \ot_E \St \ot_E \Sym^k(E^2)$, for a smooth character $\tilde{\tau}$.

\vskip8pt

Clearly, if $W$ is of type (1), it is of countable dimension. On the other hand, choosing a locally analytic section of the projection $G \ra B\bksl G$, one sees that any locally analytic principal series representation of $G$ over $E$ has an underlying vector space which is isomorphic to the space $C^\la(B\bksl G,E)$ of locally analytic functions on $B \bksl G \simeq \bbP^1(F)$, and this space is of uncountable dimension over $E$. 
\end{proof}

\vskip8pt

\begin{rem} The irreducibility and Jordan-H\"older series of locally analytic principal series representations of $\GL_2(\Qp)$ have also been discussed in \cite[Thm. 6.1 and Rem. 2 after Prop. 6.2]{ST_JAMS} (note that the definition of $c(\chi)$ given there differs by a sign from the one used in the preceding proof). 
\end{rem}

\vskip8pt

\subsection{Unitarizable principal series representations of \texorpdfstring{$\GL_2(\Qp)$}{}}\label{unitarizable}

Here, and for the remainder of this paper, we return to the case of $G = \GL_2(\Qp)$. Given a locally analytic character $\delta$ of $\Qpx$ we set $u(\delta) = v_p(\delta(p))$ and $w(\delta) = \delta'(1)$. Let $T \sub B \sub G$ be as in the introduction.

\vskip8pt

\begin{prop}\label{completion-Emerton} Let $\chi: T \ra E^\x$ be a locally analytic character, and write $\chi = \delta_2 \ot \delta_1 \chi_\cyc^{-1}$ so that $\Ind^G_B(\chi) = B^\an(\delta_1,\delta_2)$ in the notation of \cite[0.4]{Colmez_LAvec}. Assume that this representation is irreducible and has a non-zero unitary Banach space completion.\footnote{By this we mean that this representation contains a $G$-invariant $\cO_E$-lattice which does not contain a non-zero $E$-vector space.} Then:

\vskip8pt

\begin{enumerate}[(a)]
    \item $u(\delta_1) + u(\delta_2) = 0$;
    \item $u(\delta_1) \ge 0$;
    \item $w(\delta_1\delta_2^{-1}) \notin \bbZ_{>0}$.
\end{enumerate}
\end{prop}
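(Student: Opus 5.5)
The plan is to deduce all three conditions from Emerton's necessary conditions for the existence of a $G$-invariant norm, i.e.\ from the Jacquet-module criterion in \cite[0.3]{EmertonLfunctions}, after translating the inducing character into Colmez's normalization $\chi = \delta_2 \ot \delta_1\chi_\cyc^{-1}$. Conditions (a) and (b) concern the valuations of the central character and of the Jacquet module. Condition (c) concerns irreducibility and the classification in \cref{JH-princseries}. I would treat them in that order.

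For (a), the center $Z \simeq \Qpx$ acts on $B^\an(\delta_1,\delta_2)$ by the character $\delta_1\delta_2\chi_\cyc^{-1}$. A $G$-invariant lattice containing no line is in particular $Z$-stable, so $p \in Z$ must act through a unit. Since $\chi_\cyc(p) = p\cdot p^{-1} = 1$, this gives $v_p(\delta_1(p)\delta_2(p)) = 0$, i.e.\ $u(\delta_1)+u(\delta_2)=0$.

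For (b), I would use that $\Ind^G_B(\chi)$ embeds into $\Pi(\rho)$-type completions only if its Jacquet module satisfies Emerton's inequality. Concretely, the Jacquet module $J_B(\Ind^G_B(\chi))$ contains the character $\chi\cdot\delta_B^{-1}$, or its appropriate twist in Emerton's normalization. This comes from evaluating the function supported near the big cell against $U$; the relevant vector is the function $f$ with $f(w u)$ concentrated at the identity coset. Emerton's theorem states that if a locally analytic representation admits an invariant norm, then for every character $\psi$ of $T$ occurring in its Jacquet module one has $|\psi(z)|\le 1$ for $z = \diag(p,1)$, or for the element contracting $U$, depending on convention. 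Computing this with $\chi=\delta_2\ot\delta_1\chi_\cyc^{-1}$ and the modulus character should yield exactly $|\delta_1(p)|\le 1$, i.e.\ $u(\delta_1)\ge 0$, as quoted in the introduction. The main obstacle is bookkeeping the normalizations: which Weyl-conjugate character appears in $J_B$, and the sign of the modulus character. I would fix this by comparing with the statement ``$|\delta_1(p)|\le1$'' from \cite[0.3]{EmertonLfunctions} as used in \cite{Colmez_LAvec}, checking on the unitary case $u(\delta_1)=0$, where the continuous induction gives a completion.

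For (c), suppose $w(\delta_1\delta_2^{-1}) = k \in \bbZ_{>0}$. I would compute $c(\chi)$ as in the proof of \cref{JH-princseries}. Here $\chi_1/\chi_2 = \delta_2\delta_1^{-1}\chi_\cyc$, whose derivative at $1$ is $-k+1$, so $c(\chi) = k-1 \in \bbZ_{\ge0}$. By the proof of \cref{JH-princseries}, $\Ind^G_B(\chi)$ then contains the nonzero locally algebraic subrepresentation $\cF^G_B(L,\tau\c\vep_1)$ with a proper quotient, so it is reducible. This contradicts the irreducibility hypothesis, which gives (c).
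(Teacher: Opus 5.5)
Your arguments for (a) and (c) are correct and agree with the paper's. For (a), $\mathrm{diag}(p,p)$ acts by $(\delta_1\delta_2)(p)$ because $\chi_\cyc(p)=1$. For (c), $c(\chi)=w(\delta_1\delta_2^{-1})-1$, so $w(\delta_1\delta_2^{-1})\in\bbZ_{>0}$ would make $\Ind^G_B(\chi)$ reducible by the proof of Proposition \ref{JH-princseries}.

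The gap is in (b), exactly in the step you postpone as bookkeeping.

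\begin{itemize}
\item The character you name, $\chi\delta_B^{-1}=\chi\delta_{\overline{B}}$, is the one the paper uses. However, it occurs in the Jacquet module $J_{\overline{B}}(\Ind^G_B\chi)$ for the \emph{opposite} Borel \cite[0.3]{EmertonJacII}, not in $J_B(\Ind^G_B\chi)$. In $J_B$ one finds instead its Weyl conjugate $\chi^w\delta_B$, where $\chi^w(t)=\chi(wtw^{-1})$.
\item This is what decides the direction of the inequality. Take $\chi$ itself, any power of $\delta_B$, and any $t$ contracting $N_0=\begin{pmatrix}1&\mathbb{Z}_p\\0&1\end{pmatrix}$, as the criterion for $J_B$ requires. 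You then only get a bound $|\delta_2(p)|\le p^{c}$, which by (a) is an \emph{upper} bound on $u(\delta_1)$.
\item Your calibration cannot catch this. The unitary case $u(\delta_1)=0$ satisfies both $u(\delta_1)\ge 0$ and $u(\delta_1)\le 0$. Matching against ``$|\delta_1(p)|\le 1$'' presupposes what is to be proved.
\item Your version of Emerton's criterion also omits the modulus factor. It reads $|\delta_P^{-1}(t)\psi(t)|\le 1$ for $t\in T^+$.
\end{itemize}

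To repair this, you can follow the paper: embed $\chi\delta_{\overline{B}}$ into $J_{\overline{B}}(\Ind^G_B\chi)$ and apply \cite[1.6]{EmertonLfunctions} with $t_0=\mathrm{diag}(1,p)$. This element contracts $N^0_{\overline{B}}$ and gives $|\chi(t_0)|=|\delta_1(p)|\le 1$.

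Alternatively, let your big-cell vector do the work directly, which avoids Jacquet modules altogether. With $w=\begin{pmatrix}0&1\\1&0\end{pmatrix}$, let $f_0\in\Ind^G_B(\chi)$ be supported on the open and closed set $BwN_0$, with $f_0(bwn)=\chi(b)$. It is $N_0$-invariant. For $t=\mathrm{diag}(p,1)$ and $n(j)=\begin{pmatrix}1&j\\0&1\end{pmatrix}$, a direct computation gives
\[
\sum_{j=0}^{p-1} n(j)\,t\,f_0=\chi(wtw^{-1})\,f_0=\chi(\mathrm{diag}(1,p))\,f_0=\delta_1(p)\,f_0 .
\]
The gauge of the lattice in the footnote is a $G$-invariant norm with $0<\|f_0\|<\infty$; positivity holds because the lattice contains no line. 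The ultrametric inequality then yields $|\delta_1(p)|\,\|f_0\|\le\|f_0\|$, i.e.\ $u(\delta_1)\ge 0$.
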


\begin{proof} Note that the group element $g = \left(\begin{array}{cc} p & 0 \\0 & p \end{array}\right)$ acts on $V = B^\an(\delta_1,\delta_2)$ by multiplication with the scalar $(\delta_1\delta_2)(p)$. As we assume that $V$ has a non-zero unitary Banach space completion, the element $g$ must act by multiplication with a scalar in $\cO_E^\x$. Therefore, condition (a) holds. 

\vskip8pt

Now we show that condition (b) holds. Let $\ovB$ be the subgroup of lower triangular matrices.
By \cite[0.3]{EmertonJacII} there exists a closed $T$-equivariant embedding $\chi.\delta_\ovB \hookrightarrow J_\ovB(V)$, where $\delta_\ovB$ is the Haar modulus character of $\ovB$ and $J_\ovB$ is the Jacquet functor of \cite{Emerton_Jac-I}. Sibce $V$ is of compact type and topologically irreducible, \cite[1.6]{EmertonLfunctions}, applied to $P = \ovB$ and the character $\chi.\delta_\ovB$ of $T$, gives

\[\forall t \in T^+: \;|{\delta_\ovB}^{-1}(t) (\chi.\delta_\ovB)(t)| = |\chi(t)| \leq 1 \;,\] 

\vskip8pt

where $N^0_\ovB = \begin{pmatrix}1 & 0 \\ \mathbb{Z}_p & 1
\end{pmatrix}$ and $T^+ = \left\{t \in T \mid t N^0\ovB t^{-1} \subset N^0_\ovB\right\}$. For $t_0 = \begin{pmatrix}
1 & 0 \\ 0 & p
\end{pmatrix} \in T^+$ one has

\[|\chi(t_0)| = |\delta_2(1) \delta_1(p)(p|p|)^{-1}| = |\delta_1(p)| \leq 1 \;,\]

\vskip8pt

hence condition (b) holds.

\vskip8pt

(c) It is immediate to check that $w(\delta_1\delta_2^{-1})-1 = c(\chi)$, where $c(\chi)$ is defined as in the proof of \ref{JH-princseries}. Therefore, (c) is equivalent to $c(\chi) \notin \Z_{\ge 0}$, and this is precisely the case when $\Ind^G_B(\chi)$ is irreducible, by, e.g. \cite[3.1.6, 3.1.8]{KisinStrauch06} (or the discussion in the proof of \ref{JH-princseries}). 
\end{proof}

\vskip8pt

\subsection{Locally analytic vectors in Banach space representations of \texorpdfstring{$\GL_2(\Qp)$}{}}\label{Colmez}

\begin{thm}[{\cite{Colmez_poids}}]\label{Colmez-poids} Let $\rho: \sG_\Qp \ra \GL_2(E)$ be an absolutely irreducible continuous representation, and let $\Pi(\rho)$ be the unitary Banach space representation of $G = \GL_2(\Qp)$ associated to $\rho$ by the $p$-adic Langlands correspondence.

\vskip8pt

\begin{enumerate}
\item Suppose $\rho$ is neither trianguline nor a twist of a de Rham representation with distinct Hodge-Tate weights. Then $\Pi(\rho)^\la$ is irreducible.

\vskip5pt

\item Suppose $\rho$ is not trianguline and there is a continuous character $\vep$ of $\sG_\Qp$ such that $\vep \ot \rho$ is de Rham with distinct Hodge-Tate weights. Then $\Pi(\rho)^\la$ is of length two and the subspace of $\Pi(\rho)^{\SL_2(\Qp)\mbox{-}\lalg}$ of locally algebraic vectors for the action of $\SL_2(\Qp)$ is the only proper non-zero subrepresentation.

\vskip5pt

\item Suppose $\rho$ is trianguline. Then all irreducible subquotients of $\Pi(\rho)^\la$ are of principal series type. 
More precisely, if the $(\varphi, \Gamma)$-module $\Delta(\rho)$ attached to $\rho$ is an extension of $\mathcal R(\delta_2)$ by $\mathcal R(\delta_1)$ where $\delta_1, \delta_2$ are locally analytic characters of $\Q_p^\times$, then the semisimplification $\Pi(\rho)^{\textup{la,ss}}$ of $\Pi(\rho)^\la$ has the following description:
\[
\Pi(\rho)^{\textup{la,ss}} = \Ind^G_B( \delta_2 \ot \delta_1 \chi_\cyc^{-1})^{\textup{ss}} \oplus \Ind^G_B( \delta_1 \ot \delta_2 \chi_\cyc^{-1})^{\textup{ss}} 
\]
\end{enumerate}
    
\end{thm}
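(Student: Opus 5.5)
This theorem is attributed to \cite{Colmez_poids}, so the plan is to extract each of the three parts from Colmez's results there, together with \cite{Colmez_LAvec}. I would not reprove anything from scratch. The common input is Colmez's description of $\Pi(\rho)^\la$ through the $(\vphi,\Gamma)$-module $D_\rig(\rho)$. As a $B$-representation, $\Pi(\rho)^\la$ is built from the space $D_\rig(\rho)\boxtimes\bbP^1$ and its dual. Colmez's main tool is the infinitesimal action: the Casimir element acts on $\Pi(\rho)^\la$ by a scalar determined by the Sen weights of $\rho$. In \cite{Colmez_poids} he analyzes the kernels of the ``change of weight'' operators. The Jordan--H\"older length is then read off from the Hodge--Tate/de Rham properties of $\rho$ and from whether $D_\rig(\rho)$ admits rank-one saturated sub-$(\vphi,\Gamma)$-modules.

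For part (3), assume $D_\rig(\rho)$ is an extension of $\sR(\delta_2)$ by $\sR(\delta_1)$. By \cite[0.5--0.6]{Colmez_LAvec}, $\Pi(\rho)^\la$ sits in an exact sequence
\[0\lra B^\an(\delta_1,\delta_2)^{*}\lra\Pi(\rho)^\la\lra B^\an(\delta_2,\delta_1)\lra 0,\]
where the left-hand term is the principal series for $(\delta_1,\delta_2)$, up to a twist by a dual or $\chi_\cyc$. This sequence comes from the filtration $\sR(\delta_1)\boxtimes\bbP^1\subset D_\rig(\rho)\boxtimes\bbP^1$. Taking semisimplifications and applying \ref{JH-princseries}, with $B^\an(\delta_1,\delta_2)=\Ind^G_B(\delta_2\ot\delta_1\chi_\cyc^{-1})$, gives the displayed formula. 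Every irreducible subquotient is then of principal series type. The one delicate point is the degenerate (locally algebraic) case, where the two inductions share the factor $\Sym^k\ot\St$ and its twists. Here I would check that the multiplicities in Colmez's description match the direct sum of the two semisimplified inductions, rather than producing extra or missing copies.

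For parts (1) and (2), assume $\rho$ is not trianguline. Colmez \cite[Cor.~0.4]{Colmez_poids} shows the following. If $\Pi(\rho)^\la$ has a non-trivial closed subrepresentation $\Pi_1$, then either $\Pi_1$ contains $U$-finite vectors, which forces rank-one submodules of $D_\rig(\rho)$ and hence triangulinity, or $\Pi_1$ is killed by a weight-changing operator. The second alternative forces $\Pi_1$ to be $\SL_2(\Qp)$-locally algebraic, so $\rho$ is a twist of a de Rham representation with distinct Hodge--Tate weights. In case (1) neither alternative occurs, so $\Pi(\rho)^\la$ is irreducible. In case (2), $\Pi(\rho)^{\SL_2\text{-}\lalg}$ is a twist of $\Sym^k\ot\LL(\WD(D_\pst(\rho)))$. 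It is non-zero and irreducible because $\WD$ is irreducible (supercuspidal). The quotient is irreducible by the same dichotomy applied to the quotient. This gives length two with a unique proper subobject.

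The main obstacle is this dichotomy step: proving that a closed subrepresentation must either produce a triangulation or be locally algebraic. This requires Colmez's fine analysis of $D_\rig(\rho)\boxtimes\bbP^1$ and of the Lie algebra action. For this step I would cite \cite{Colmez_poids} directly rather than reprove it.
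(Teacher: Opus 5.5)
\textbf{The gap is in part (3).} Your plan for (1) and (2) agrees with the paper, which only cites \cite[Thm.~0.3 (i)]{Colmez_poids} and \cite[Cor.~0.4 (ii)]{Colmez_poids}, together with \cite[Thm.~0.20]{Colmez10} (after twisting $\rho$ to be de Rham) to identify the subobject with $\Pi(\rho)^\SLlalg$. For (3), the exact sequence $0\to B^\an(\delta_1,\delta_2)\to\Pi(\rho)^\la\to B^\an(\delta_2,\delta_1)\to 0$, with $B^\an(\delta_1,\delta_2)$ or a twist of it as subobject, does not hold for every trianguline $\rho$. Semisimplifying it therefore does not prove the formula in general.

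Take $\rho$ semistable and not crystalline. The $\mathcal{L}$-invariant requires $\dim H^1(\sR(\delta_1\delta_2^{-1}))=2$, which for these points gives $\delta_1\delta_2^{-1}=x^{k+1}|x|$ with $k\ge 1$. Writing $\chi=\delta_2\ot\delta_1\chi_\cyc^{-1}=\chi_1\ot\chi_2$, one gets $\chi_1/\chi_2=x^{-k}$, i.e.\ $\tau=\triv$ in the proof of \ref{JH-princseries}. Hence $B^\an(\delta_1,\delta_2)$ contains $(\chi_2\circ\det)\ot\Sym^k(E^2)\ot\ind^G_B(\triv)$, and so contains a twist of $\Sym^k(E^2)$ as a subrepresentation. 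But $\Pi(\rho)$ is topologically irreducible and infinite dimensional, so it has no finite-dimensional $G$-stable subspace. Thus $B^\an(\delta_1,\delta_2)$ is not a subrepresentation of $\Pi(\rho)^\la$ here.

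Your diagnosis of the degenerate case is also off. Since $c(\delta_2\ot\delta_1\chi_\cyc^{-1})+c(\delta_1\ot\delta_2\chi_\cyc^{-1})=-2$, at most one of the two inductions is reducible. They never share a locally algebraic constituent, so the multiplicity check you propose is beside the point. What you need is a source covering the non-generic cases: \cite{LiuXieZhang} and \cite{Liu_l-a-vectors-crystabelian} in addition to \cite{Colmez_LAvec}. Alternatively, do as the paper does and cite \cite[Rem.~0.2 (ii)]{Colmez_poids}, which gives $\Pi(\rho)^{\textup{la,ss}}$ uniformly for every triangulation.

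Two smaller points about (1) and (2):
\begin{itemize}
\item Do not attribute the $U$-finite-vector dichotomy to \cite[Cor.~0.4]{Colmez_poids}; it is not stated there, and it fails as a general principle. A locally analytic principal series $\Ind^G_B(\chi_1\ot\chi_2)$ has non-zero $U$-finite vectors only if $\chi_2/\chi_1=x^n$ with $n\ge 0$. So for generic trianguline $\rho$, the closed subrepresentation $B^\an(\delta_1,\delta_2)\subset\Pi(\rho)^\la$ has no such vectors and is not $\SL_2(\Qp)$-locally algebraic.
\item Calling $\LL(\WD(D_\pst(\rho)))$ ``supercuspidal'' is wrong when $\WD(D_\pst(\rho))$ is irreducible but not absolutely irreducible. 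Over a quadratic extension of $E$ it then becomes an irreducible principal series. Irreducibility of $\Pi(\rho)^\lalg$ over $E$ still holds, so your conclusion there is unaffected.
\end{itemize}
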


\begin{proof} (1) This holds by \cite[Thm. 0.3, (i)]{Colmez_poids}. 

\vskip8pt

(2) That $\Pi(\rho)^\la$ has length two in these cases is \cite[Cor. 0.4, (ii), first and second bullet points]{Colmez_poids}. As $\rho \mapsto \Pi(\rho)$ commutes with twisting by characters, we may assume that $\rho$ is de Rham with Hodge-Tate weights $a<b$. The assertion about the locally algebraic vectors then follows from \cite[Thm. 0.20]{Colmez10}.

\vskip8pt

(3) This is \cite[Rem. 0.2, (ii), first case]{Colmez_poids}.
\end{proof}

\section{Trianguline representations and representations of principal series type}
\addtocounter{subsection}{1}

We begin with a lemma which will be useful in the proof of \ref{main-result} below.

\vskip8pt

\begin{lemma}\label{dim-of-l-a-vectors} Let $\Pi$ be an admissible unitary $E$-linear Banach space representation of $G = \GL_2(\Qp)$. Denote by $\Pi^\SLlalg$ the closed subspace of $\Pi^\la$ vectors which are locally algebraic for the action of $\SL_2(\Qp)$. If $\Pi^\la/\Pi^\SLlalg \neq 0$, then $\Pi^\la$ is of uncountable dimension as a vector space over $E$.
\end{lemma}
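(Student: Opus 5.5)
The plan is to show that if $\Pi^\la$ had countable dimension, then every vector of $\Pi^\la$ would already be $\SL_2(\Qp)$-locally algebraic, so that $\Pi^\la = \Pi^\SLlalg$. The argument combines a Baire category argument on a suitable Fréchet or LB-structure with the fact that locally analytic vectors on which a Lie algebra acts locally finitely are locally algebraic.

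\emph{Step 1: topology.} Since $\Pi$ is admissible, $\Pi^\la$ is an admissible locally analytic representation (Schneider–Teitelbaum). Fix a compact open uniform subgroup $H \subset G$. Then $\Pi^\la = \varinjlim_h \Pi^{\mathbb H_h\text{-an}}$, where each $\Pi^{\mathbb H_h\text{-an}}$, the space of vectors analytic on the rigid analytic group $\mathbb H_h$, is a Banach space and the transition maps are compact injections. So $\Pi^\la$ is of compact type.

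\emph{Step 2: a Baire argument.} Suppose $\Pi^\la$ has countable dimension over $E$, and let $(v_i)$ be a basis. For each $h$, the Banach space $B_h = \Pi^{\mathbb H_h\text{-an}}$ is the increasing union of the subspaces $B_h \cap \mathrm{span}(v_1,\dots,v_n)$. These are finite-dimensional, hence closed. By Baire, some of them has nonempty interior, so $B_h$ is finite-dimensional. Consequently every $v \in \Pi^\la$ lies in a finite-dimensional space $B_h$ that is stable under $H_h$ (the points of $\mathbb H_h$) and under $\fg = \mathfrak{gl}_2$. Hence $\fg$ acts locally finitely on $\Pi^\la$.

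\emph{Step 3: from local finiteness to local algebraicity.} On a finite-dimensional $\fg$-stable, $H_h$-stable subspace $W$, the semisimple algebra $\mathfrak{sl}_2$ acts semisimply, so $W$ decomposes into isotypic components $\Sym^k(E^2)\otimes \Hom_{\mathfrak{sl}_2}(\Sym^k,W)$. Shrinking $H_h$ if necessary, the action of $H_h\cap\SL_2(\Qp)$ is obtained by exponentiating the $\mathfrak{sl}_2$-action on a small neighbourhood of $1$. It therefore agrees on that neighbourhood with the algebraic action on each $\Sym^k$ factor, tensored with a trivial action on the multiplicity space. So $v$ is $\SL_2(\Qp)$-locally algebraic, and $\Pi^\la = \Pi^\SLlalg$, which contradicts the hypothesis.

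I expect the main obstacle to be Step 3, in particular making rigorous that locally analytic plus $\mathfrak{sl}_2$-locally finite implies locally algebraic for $\SL_2$. This requires a care about the radius of convergence of the exponential and about restricting to a small open subgroup on which the group action is determined by the Lie algebra action; both can be handled by passing to a sufficiently small uniform subgroup. A cleaner alternative for Step 2, if needed, is to use that any vector space of countable dimension carrying a complete metrizable (Banach) topology is finite-dimensional.
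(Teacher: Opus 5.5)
Your proof is correct and follows the paper's route. You write $\Pi^{\mathrm{la}}$ as a countable increasing union of Banach spaces that inject into it, use Baire to force each of them to be finite-dimensional under the countable-dimension hypothesis, and conclude that finite-dimensional locally analytic representations of an open subgroup of $\mathrm{SL}_2(\mathbb{Q}_p)$ are locally algebraic. The differences are only in packaging: you use Emerton's Banach spaces of $\mathbb{H}_h$-analytic vectors, where injectivity into $\Pi^{\mathrm{la}}$ is automatic and compactness of the transition maps is not needed, instead of the duals $(M_n)'_b$ of $M_n = D_{r_n}(H,E)\otimes_{D(H,E)} M$; and you sketch by hand the integration argument that the paper cites as \cite[3.2]{ST_UgFinite}.
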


\begin{proof} The representation $\Pi^\la$ is admissible by \cite[7.1]{ST_inventiones} whose notation and results we use in the following. Let $H \sub G$ be an open compact subgroup, and fix a strictly increasing sequence $(r_n)_n$ in $(\frac{1}{p},1) \cap p^\Q$ converging to $1$. Let $M = (\Pi^\la)'_b$ be the continuous dual space, equipped with the strong topology. Then $M_n := D_{r_n}(H,E) \ot_{D(H,E)} M$ is a finitely generated module over the noetherian Banach algebra $D_{r_n}(H,E)$. By \cite[16.5]{NFA} one has $\Pi^\la = \varinjlim_n (M_n)'_b$. Because the system $(M_n)_n$ is a coherent sheaf, the map $M \ra M_n$ has dense image, and the map $(M_n)'_b \ra \Pi^\la$ is thus injective. Hence, if $\Pi^\la$ is of countable dimension, then this is also the case for the Banach spaces $(M_n)'_b$. A Baire category argument shows that each $(M_n)'_b$ must be finite-dimensional. By \cite[3.2]{ST_UgFinite}, each $(M_n)'_b$ is contained in the locally algebraic vectors for the action of $\SL_2(\Qp)$. Since $\Pi^\la  = \varinjlim_n (M_n)'_b$, this would imply $\Pi^\la  = \Pi^\SLlalg$, contrary to the hypothesis.
\end{proof}

\begin{thm}\label{main-result} Let $E/\Qp$ be a finite extension and $\rho: \sG_\Qp \ra \GL_2(E)$ an absolutely irreducible continuous representation. Let $\Pi(\rho)$ be the unitary Banach space representation of $G = \GL_2(\Qp)$ associated to $\rho$ by the $p$-adic Langlands correspondence. Then the following assertions are equivalent:

\vskip8pt

\begin{enumerate}[(i)]
\item The locally analytic representation $\Pi(\rho)^\la$ has a subquotient which is isomorphic to a non-zero subquotient of a locally analytic principal series representation of $G$.

\vskip5pt

\item $\rho$ is trianguline.
\end{enumerate}  
\end{thm}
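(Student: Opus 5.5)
The direction (ii)$\Rightarrow$(i) is immediate from \ref{Colmez-poids}(3): if $\rho$ is trianguline, every irreducible subquotient of $\Pi(\rho)^\la$ is of principal series type. For (i)$\Rightarrow$(ii) we argue by contradiction. Assume $\rho$ is not trianguline but $\Pi(\rho)^\la$ has a subquotient $W$ isomorphic to a non-zero subquotient of some $\Ind^G_B(\chi)$. Since $\Pi(\rho)^\la$ has finite length by \ref{Colmez-poids}, we may shrink $W$ and assume it is irreducible. By \ref{JH-princseries}, $W$ is then of type (1) or type (2). We split according to the two classes of non-trianguline $\rho$ in \ref{Colmez-poids}.

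\emph{Case (2): a twist of $\rho$ is de Rham with distinct Hodge--Tate weights.} By \ref{Colmez-poids}(2), $\Pi(\rho)^\la$ has exactly two irreducible subquotients, $\Pi(\rho)^\SLlalg$ and $Q=\Pi(\rho)^\la/\Pi(\rho)^\SLlalg$. Twist so that $\rho$ is de Rham. Then $\Pi(\rho)^\SLlalg\simeq \Sym^k\otimes\LL(\WD(D_\pst(\rho)))$ up to a twist. Since $\rho$ is not trianguline, $\WD(D_\pst(\rho))$ is irreducible, so $\LL(\WD(D_\pst(\rho)))$ is supercuspidal. Such a representation is none of the smooth representations listed in \ref{JH-princseries}(1), and it has countable dimension, so it is not of type (2) either.

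It remains to rule out $Q$. By \ref{dim-of-l-a-vectors}, $Q$ has uncountable dimension, so by \ref{JH-princseries} it would have to be an irreducible locally analytic principal series $\Ind^G_B(\chi)$. To exclude this, I would reduce to Hodge--Tate weights $0,1$ via Colmez's change of weights and then use the Dospinescu--Le Bras description $Q\simeq (H^0(\Sigma_n,\cO)^\psi)'_b$. The goal is to show that $Q$ has vanishing naive Jacquet module, i.e.\ $Q_U=0$, whereas a principal series has $\Ind^G_B(\chi)_U\neq 0$ (evaluation at $w$ gives a nonzero $U$-coinvariant functional). Dually, this amounts to showing that $U$-invariant functions in $H^0(\Sigma_n,\cO)^\psi$ vanish. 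The steps are as follows. First, $U$-invariant analytic functions on $\Sigma_n$ are locally constant, since $U$-orbits are Zariski dense in each component. Second, $D^\times$ acts on $\pi_0(\Sigma_n)$ through the reduced norm, so the $\psi$-isotypic part of the locally constant functions vanishes once $\dim\psi>1$. In the case where $\WD$ is irreducible but not absolutely irreducible, $\psi$ could be one-dimensional, and I would handle it separately. One option is to pass to a finite extension $E'$, where $\rho$ may become trianguline; but Colmez's length statement is insensitive to scalar extension, so the argument needs care. I expect this case (2) step, matching $Q$ against principal series, to be the main obstacle.

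\emph{Case (1): no twist of $\rho$ is de Rham with distinct Hodge--Tate weights.} Here $\Pi(\rho)^\la$ is irreducible, hence equal to $W$. It is of uncountable dimension by \ref{dim-of-l-a-vectors}, because $\Pi(\rho)^\SLlalg=0$: otherwise a finite-dimensional algebraic $\SL_2$-representation would appear, forcing de Rham-ness. So $\Pi(\rho)^\la\simeq B^\an(\delta_1,\delta_2)$ is an irreducible principal series with a unitary completion, and \ref{completion-Emerton} gives $u(\delta_1)\ge0$ and $u(\delta_1)+u(\delta_2)=0$.

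If $u(\delta_1)=0$, the character is unitary. By universality of $\Pi(\rho)$ as completion of its locally analytic vectors, we obtain $\Pi(\rho)\to B^\cont(\delta_1,\delta_2)$, which is nonzero and hence an isomorphism by irreducibility of both sides. This is impossible, since ordinary representations are not in the image of $\Pi$. If $u(\delta_1)>0$, then $(\delta_1,\delta_2,\infty)\in\sS_\irr$, so $B^\an(\delta_1,\delta_2)\hookrightarrow\Pi(V(s))$. This extends to a nonzero map $\Pi(\rho)\to\Pi(V(s))$ between topologically irreducible Banach representations, which must be an isomorphism. Hence $\rho\simeq V(s)$ is trianguline, a contradiction.
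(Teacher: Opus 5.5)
Your treatment of case (1), and of case (2) when $\WD(D_\pst(\rho))$ is absolutely irreducible, closely follows the paper. The gap is the remaining case, where $W=\WD(D_\pst(\rho))$ is irreducible but not absolutely irreducible. You leave this case open, and it also breaks your argument for the locally algebraic constituent.

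Your claim that non-triangulinity makes $\LL(W)$ supercuspidal is false in this case. Let $E_2=\End_{E[\sW_\Qp]}(W)$, a quadratic extension of $E$. Then $W\otimes_E E_2\simeq\tau\oplus(\sigma\circ\tau)$, and $\LL(W)\otimes_E E_2\simeq\ind^G_B(\tau\otimes(\sigma\circ\tau)|\cdot|^{-1})$ is an irreducible smooth principal series. So $\LL(W)$ has non-zero Jacquet module, and "supercuspidal, hence not in the list of \ref{JH-princseries}(1)" does not apply. You need a descent argument instead:
\begin{enumerate}
\item Suppose $\Pi(\rho)^\lalg\simeq\Sym^{k-1}(E^2)\otimes_E\ind^G_B(\chi_1\otimes\chi_2|\cdot|^{-1})$ with $E$-valued $\chi_i$.
\item Extend scalars to $E_2$ and recover the smooth part via $\varinjlim_K\Hom_K(\Sym^{k-1}(E_2^2),-)$.
\item Uniqueness of the inducing pair for irreducible principal series \cite[9.10]{BushnellHenniart_GL2} then forces $\tau$ to be $E$-valued, a contradiction.
\end{enumerate}
For $Q$, the Drinfeld-tower description is unavailable here. $\LL(W)$ is not a discrete series and has no Jacquet--Langlands transfer; the issue is not that $\psi$ might be one-dimensional.

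The idea you are missing for $Q$ is exactly the scalar extension you set aside. Colmez's length statement is \emph{not} insensitive to it:
\begin{itemize}
\item Since $\rho_{E_2}$ is trianguline with distinct Hodge--Tate weights, $\Pi(\rho)^\la\otimes_E E_2=\Pi(\rho_{E_2})^\la$ has length $3$ or $4$ by \cite[Cor.~0.4(ii)]{Colmez_poids}. Over $E$ the length is $2$.
\item $\Pi(\rho)^\lalg\otimes_E E_2$ is irreducible, so $Q\otimes_E E_2$ has length at least $2$.
\item If $Q\simeq\Ind^G_B(\chi)$ were an irreducible principal series, then $c(\chi)\notin\Z_{\ge0}$. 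This condition is unchanged by enlarging the coefficient field, so $Q\otimes_E E_2$ would still be irreducible, a contradiction.
\end{itemize}

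Two smaller slips. First, the non-zero $U$-invariant functional on $\Ind^G_B(\chi)$ (with $f(bg)=\chi(b)f(g)$ and right translation) is evaluation at $1$, not at $w$. Indeed $f(wu)=f(\bar u w)$ with $\bar u$ lower unipotent. Second, when $u(\delta_1)=0$, surjectivity of $\Pi(\rho)\to B^\cont(\delta_1,\delta_2)$ needs closed image. That comes from strictness of morphisms between admissible Banach representations, not from irreducibility of the target alone.
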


\begin{proof} ``(ii) $\Rightarrow$ (i)''. This is implied by part (3) of \ref{Colmez-poids}. In fact, every irreducible subquotient of $\Pi(\rho)^\la$ is of principal series type.

\vskip8pt

``(i) $\Rightarrow$ (ii)''. Suppose that $\rho$ is not trianguline. Then $\rho$ is exactly one of the types described in (1) and (2) of \ref{Colmez-poids}. Set $\Pi := \Pi(\rho)$.

\vskip8pt

(1) We assume that $\rho$ is of type (1) in \ref{Colmez-poids}. If $\rho$ is a twist of a de Rham representation with equal Hodge-Tate weights, then we may twist $\rho$ and henceforth assume without loss of generality that it is actually de Rham. Then $\Pi^\lalg = \Pi^\SLlalg$. By \ref{Colmez-poids}, $\Pi^\la$ is topologically irreducible. Suppose that $\Pi^\la$ is a subquotient of a locally analytic principal series representation. By \cite[0.20]{Colmez10} we have $\Pi^\lalg = 0$, hence $\Pi^\SLlalg = 0$, and by \ref{dim-of-l-a-vectors} $\Pi^\la$ is not of countable dimension over $E$. By \ref{JH-princseries}, it follows that $\Pi^\la$ is itself isomorphic to a principal series representation $\Ind^G_B(\chi)$. Hence $\Ind^G_B(\chi)$ embeds into the Banach space representation $\Pi$, and can thus be equipped with a $G$-invariant norm. Write $\chi = \delta_2 \ot \delta_1 \chi_\cyc^{-1}$, where $\chi_\cyc(x) = x|x|$ is the $p$-adic cyclotomic character. We then have $|(\delta_1\delta_2)(p)| = |\chi(\diag(p,p))| = 1$ because $\Ind^G_B(\chi)$ embeds into a unitary Banach space representation.

\vskip8pt

{\it Case $u(\delta_1)=0$.} Then $\chi$ is unitary, and the locally analytic induction $\Ind^G_B(\chi)$ embeds into the continuous induction $\Ind^G_B(\chi)^\cont$. By \cite[VII.11]{ColmezDospinescu14}, $\Pi(\rho)$ is the universal completion of $\Pi^\la$, and hence there is a continuous $G$-homomorphism $\alpha: \Pi \ra \Ind^G_B(\chi)^\cont$. This has dense image and is strict by \cite[6.2.9]{EmertonA}, and thus has closed image. The map is therefore surjective. Since $\Pi$ is irreducible, it is also injective and hence an isomorphism of Banach space representations. Because $\Ind^G_B(\chi)^\cont$ is an ordinary Banach space representation, this contradicts \cite[1.1]{ColmezDospinescuPaskunas14}, \cite[1.3]{Paskunas_Montreal}.

\vskip8pt

{\it Case $u(\delta_1) > 0$.} By \ref{completion-Emerton} the triple $s = (\delta_1,\delta_2,\infty)$ is a point of the subset $\sS^{\rm ng}_\ast$ of the trianguline variety $\sS_\irr$ of Colmez, see, e.g., \cite[0.3]{Colmez_LAvec}. By \cite[8.7 (i)]{Colmez_LAvec}, the locally analytic representation $\Ind^G_B(\chi) = B^\an(\delta_1,\delta_2)$ embeds into the unitary Banach representation $\Pi(V(s))$. 

\vskip8pt

On the other hand, using again that the universal completion of $\Pi^\la$ is equal to $\Pi$  \cite[VII.11]{ColmezDospinescu14}, we deduce that the embedding $\Ind^G_B(\chi) \hra \Pi(V(s))$ factors through $\Pi$, and we thus have a non-zero continuous $G$-homomorphism $\alpha: \Pi \ra \Pi(V(s))$. Moreover, $\alpha$ has dense image because the image contains $\Ind^G_B(\chi)$ and $\Pi(V(s))$ is irreducible. By the same arguments as above, we conclude that $\alpha$ must be an isomorphism. By \cite[1.1]{ColmezDospinescuPaskunas14}, this implies $\rho \simeq V(s)$, which contradicts our assumption that $\rho$ is not trianguline.

\vskip8pt

(2) Here we consider a non-trianguline representation $\rho$ of  type (2) in \ref{Colmez-poids}. As the $p$-adic local Langlands correspondence is compatible with twisting by characters \cite[III.13]{ColmezDospinescu14}, we can assume without loss of generality that $\rho$ is de Rham with Hodge-Tate weights $0$ and $k>0$. Again we have $\Pi^\lalg = \Pi^\SLlalg$. We distinguish two cases.

\vskip8pt

(2a) Suppose the associated Weil group representation $\WD(D_\pst(\rho))$ is absolutely irreducible. 

\vskip8pt

{\it Step 1.} We now use \cite{Colmez_poids} whose notation we will largely follow. Let $M$ be the Deligne-Fontaine module associated to $\rho$, and let $\sL$ be the 1-dimensional $E$-vector subspace of $M_\dR = (\Qpb \ot_{\Qpnr} M)^{\sG_\Qp}$ corresponding to the filtration associated to $\rho$. Let $\Delta_{k,\sL}$ be the $(\vphi,\Gamma)$-module corresponding to $\rho$ (which is determined by $M$, $k$, and $\sL$), and let $\Delta$ be the $(\vphi,\Gamma)$-module corresponding to the representation of $\sG_\Qp$ of Hodge-Tate weights $(0,0)$ with Deligne-Fontaine module $M$. By \cite[0.6 (iii)]{Colmez_poids} there is an admissible locally analytic representation $\Pi(M,-k)$ of $G$ which fits in an exact sequence

\[0 \lra \Pi(\Delta_{k,\sL})^\lalg \lra \Pi(\Delta_{k,\sL})^\la \lra \Pi(M,-k) \ot {\det}^k \lra 0 \;,\]

\vskip8pt

where $\Pi(\Delta_{k,\sL})^\la \simeq \Pi^\la$ and the superscript ``lalg'' denotes locally algebraic vectors (Colmez writes ``$x^k$'' for the twist by $\det^k$ on the right-hand side of the exact sequence). In particular, $\Pi(\Delta_{k,\sL})^\lalg = \Pi^\lalg$. 

\vskip8pt

{\it Step 2.} By \cite[0.20, 0.21]{Colmez10} we have $\Pi^\lalg = \Sym^{k-1}(E^2) \ot_E \pi$, which is irreducible by \ref{Colmez-poids} (2). Moreover, $\pi = \LL(M)$ is smooth and supercuspidal. Because $\pi$ is of countable dimension, if $\Pi(\rho)^\lalg$ were isomorphic to an irreducible subquotient of a principal series representation, it would have to be of the form $(\lambda \circ \det) \ot_E \Sym^{\wtk}(E^2) \ot_E \wtpi$, with a smooth irreducible representation $\wtpi$ which is not supercuspidal, by \ref{JH-princseries}. Viewing these as Lie algebra representations shows that $\wtk = k-1$ and that $\lambda$ is smooth. After replacing $(\lambda \c \det) \ot_E \wtpi$ by $\wtpi$ we may therefore assume $\lambda = \triv$.

\vskip8pt

But if $\Sym^{k-1}(E^2) \ot_E \pi \simeq \Sym^{k-1}(E^2) \ot_E \wtpi$, then 

\[\pi \simeq \varinjlim_K \Hom_K(\Sym^{k-1}(E^2),\Pi(\rho)^\lalg) \simeq \varinjlim_K \Hom_K(\Sym^{k-1}(E^2),\Sym^{k-1}(E^2) \ot_E \wtpi) \simeq \wtpi \;,\]

\vskip8pt

where the inductive limit is over the open subgroups $K \sub G$, and the isomorphisms on the left and right hold by \cite[App., Thm. 1]{ST_UgFinite}, \cite[4.2.8]{EmertonA} (and the arguments given in the proofs of these results). This contradicts the supercuspidality of $\pi$. 

\vskip8pt

{\it Step 3.} We now consider $V := \Pi^\la/\Pi^\lalg = \Pi^\la/\Pi^\SLlalg$, which is irreducible by part (2) of \ref{Colmez-poids}. Our goal is to show that $V$ is not isomorphic to an irreducible subquotient of a locally analytic principal series representation. Suppose, to the contrary, that $V$ were isomorphic to such a representation. By \Cref{dim-of-l-a-vectors}, $V$ is not countable-dimensional; \ref{JH-princseries} would therefore imply that $V$ itself is an irreducible principal series representation. From the exact sequence

\[0 \lra \Pi(\Delta_{k,\sL})^\lalg \lra \Pi(\Delta_{k,\sL})^\la \lra \Pi(M,-k) \ot {\det}^k \lra 0 \;,\]

\vskip8pt

we deduce that $V \cong \Pi(M,-k) \ot {\det}^k$. Let $U$ be the unipotent radical of $B$. 
By \cite[3.18, 3.20]{Colmez_poids}, we have the following isomorphism as $U$-representations:

\[\Pi(M,-k) \ot {\det}^k \cong \Pi(M,-1) \ot {\det} \,.\] 

\vskip8pt

The {\it naive Jacquet module} (with respect to the Borel $B$) $J^{\rm naive}(Z)$ of a locally analytic representation $Z$ is defined to be the maximal Hausdorff quotient of $Z$ on which $U$ acts trivially. It follows that  $J^{\rm naive}(V)$ vanishes if and only if $J^{\rm naive}(\Pi(M,-1) \ot {\det})$ vanishes.
It suffices to show that $J^{\rm naive}(\Pi(M,-1) \ot {\det}) = 0$, because the naive Jacquet module of locally analytic principal series representations never vanishes, cf. \cite[5.1.7]{KisinStrauch06}. We have thus reduced the problem to the case when $k=1$.

\vskip8pt

{\it Step 4.} Our goal is now to show that the naive Jacquet module of $\Pi(M,-1) \ot {\det}$ vanishes. We are going to use some of the main results of \cite{DospinescuLeBras}. Let $D$ be the quaternion division algebra over $\Qp$ and $\cO_D$ its ring of integers. Denote by $\breve{\bbZ}_p$ the completion of the maximal unramified extension of $\Zp$. Let $\cM_0$ be the Rapoport-Zink space which is the formal moduli scheme of special formal $\cO_D$-modules $X$ on schemes $S$ over $\breve{\bbZ}_p$ on which $p$ is locally nilpotent, together with a quasi-isogeny $\bbX \times_{\overline{\bbF}_p} (S \bmod p) \ra X \times_S (S \bmod p)$, where $\bbX$ is a fixed special formal $\cO_D$-module over $\overline{\bbF}_p$. 

\vskip8pt

Denote by $\breve{\cM}_0$ the rigid analytic space over $\breve{\bbQ}_p = \breve{\bbZ}_p[\frac{1}{p}]$ associated to $\cM_0$. It is a disjoint union of countably-infinite many copies of the $p$-adic upper-half plane over $\breve{\bbQ}_p$ (indexed by the height of the quasi-isogeny on the mod-$p$ fiber). This rigid analytic space is equipped with a Weil descent datum which becomes effective after taking the quotient by the action of $p\triv := \diag(p,p)$. Let $\Sigma_0$ be the corresponding model over $\Qp$ of $\breve{\cM}_0/(p\triv)^\Z$. We then consider the Drinfeld covering space $\breve{\cM}_n$ over $\breve{\cM}_0$ which parametrizes primitive $p^n$-torsion points on the universal deformation of $\bbX$ over $\breve{\cM}_0$. It is a Galois covering of $\breve{\cM}_0$ with Galois group $H_n := \cO_D^\x/(1+p^n\cO_D)$. Finally, the Weil descent datum on $\breve{\cM}_0$ lifts to a Weil descent datum on $\breve{\cM}_n$ and it becomes effective on $\breve{\cM}_n/(p\triv)^\Z$. Let $\Sigma_n$ be the corresponding model over $\Qp$ of this quotient.  

\vskip8pt

As before, we have the smooth supercuspidal representation $\pi = {\rm LL}(M)$, and we let $\psi = {\rm JL}(\pi)$ be the representation of $D^\x$ which corresponds to $\pi$ by the Jacquet-Langlands correspondence. We set $\Sigma_{n,E} = \Sigma_n \times_{\Sp(\Qp)} \Sp(E)$. After possibly enlarging $E$ we may assume that all geometrically connected components of $\Sigma_n$ are defined over $E$. In particular, $\Sigma_{0,E}$ is isomorphic to the disjoint union of two copies of the $p$-adic upper-half plane. By \cite[Thm. 1.2]{DospinescuLeBras}, the representation $\Pi(M,-1) \ot {\det}$ is isomorphic to $\Big(H^0(\Sigma_{n,E},\cO)^\psi\Big)'_b$ for some $n > 0$, where $(-)'_b$ denotes the continuous dual space with the strong topology. Since $\Sigma_{n,E}$ is a Stein space, $H^0(\Sigma_{n,E},\cO)^\psi$ is a nuclear Fr\'echet space and thus reflexive. Therefore, it suffices to show that the subspace of $U$-invariant vectors of $H^0(\Sigma_{n,E},\cO)^\psi$ is zero. For a suitable global coordinate $z$ on a connected component of $\Sigma_{0,E}$, the right action of $U$ is given by 

\[z.\left(\begin{array}{cc}
1 & u \\
0 & 1
\end{array}\right) = z+u \;.\]

\vskip8pt

This shows that if $f \in H^0(\Sigma_{0,E},\cO)$ is $U$-invariant, it is locally constant, i.e., constant on each connected component of $\Sigma_{0,E}$. Now let $f \in \cO(\Sigma_{n,E})$ be $U$-invariant. The polynomial $\prod_{h \in H_n} (T - h(f))$ has coefficients in $H^0(\Sigma_{0,E},\cO)$, because $H^0(\Sigma_{0,E},\cO) \ra H^0(\Sigma_{n,E},\cO)$ is a finite ring extension which is Galois with group $H_n$.  These coefficients are also $U$-invariant and hence constant on each connected components of $\Sigma_{0,E}$. On each connected component $C$ of $\Sigma_{n,E}$, the function $f|_C$ is thus a root of a polynomial with coefficients in $E$, and hence itself constant (as $E$ is integrally closed in $H^0(C,\cO)$). It follows that $H^0(\Sigma_{n,E},\cO)^U$ consists of locally constant functions.  

\vskip8pt

The action of $D^\x$ on the space of locally constant functions on $\Sigma_{n,E}$ factors through the norm, cf. \cite[sec. 4.3]{Carayol90}. But $f$ lies in an isotypic component $H^0(\Sigma_{n,E},\cO)^\psi$ for an irreducible representation $\psi$ which does \emph{not} factor through the norm, then $f$ must be zero. This completes the proof in case (2a).

\vskip8pt

(2b) Let $\rho$ again be non-trianguline of type (2), de Rham with Hodge-Tate weights $0$ and $k>0$, and suppose $W := \WD(D_\pst(\rho))$ is not absolutely irreducible. We first note that $W$ must be irreducible, because otherwise a rank 1 subobject of $D_\pst(\rho)$, together with the induced filtration, gives rise to a saturated rank 1 subobject of $D_\rig(\rho)$, which is a triangulation since $D_\rig(\rho)$ is of rank 2, cf. \cite[1.18 (2)]{Nakamura_Classification}, \cite[4.1]{BreuilSchneider}.

\vskip8pt

Set $E_2 = \End_{E[\sW_\Qp]}(W)$, where $\sW_\Qp$ is the Weil group of $\Qp$. Because $W$ is irreducible, the ring $E_2$ is a division algebra, and because $W$ is not absolutely irreducible, $E_2 \neq E$. Because $2 = \dim_E(W) = \dim_E(E_2) \cdot \dim_{E_2}(W)$, we find $[E_2:E] = 2$ and the action of $\sW_\Qp$ on $W$ is given by a (locally constant) character $\tau: \sW_\Qp \ra E_2^\x$. In the following we write $(-)_{E_2}  = (-) \ot_E E_2$. Let $\sigma \in \Gal(E_2/E)$ be the non-trivial Galois automorphism. Then $W_{E_2} \simeq \tau \oplus \sigma \circ \tau$. Since $W$ is irreducible, $\tau$ does {\it not} factor through $\Ex \hra E_2^\x$, and it follows that 

\begin{numequation}\label{character-conditions}
\tau (\sigma \circ \tau)^{-1} \notin \{\triv, |\cdot|^{\pm 1}\} \;.
\end{numequation}

\vskip8pt

Here $|\cdot|: \sW_\Qp \ra \sW_\Qp^{\rm ab} \simeq \Qpx \stackrel{|\cdot|_p}{\lra} p^\Z$, where the $p$-adic absolute value is normalized by $|p|_p = p^{-1}$, as usual. We have 

\[\begin{array}{rcl}
\Pi(\rho_{E_2})^\lalg & \simeq & \Sym^{k-1}(E_2^2) \ot_{E_2} \LL\Big(\WD(D_\pst(\rho_{E_2}))\Big) \\
&&\\
& \simeq & \Sym^{k-1}(E_2^2) \ot_{E_2} \ind^G_B\Big(\tau \ot (\sigma \circ \tau) |\cdot|^{-1}\Big) \;,
\end{array}\]

\vskip8pt

where the first isomorphism holds by \cite[0.20, 0.21]{Colmez10} and the second isomorphism by \cite[subsection 11 in VI.6]{Colmez10}. Because of \ref{character-conditions} the representation $\ind^G_B\Big(\tau \ot (\sigma \circ \tau) |\cdot|^{-1}\Big)$ is irreducible \cite[9.6]{BushnellHenniart_GL2}. If $\Pi^\lalg$ were a representation of principal series type, it would be of the form $\Sym^{k-1}(E^2) \ot_E \ind^G_B\Big(\chi_1 \ot \chi_2|\cdot|^{-1}\Big)$, with $E$-valued characters $\chi_1$, $\chi_2$ satisfying $\chi_1\chi_2^{-1} \neq |\cdot|^{\pm 1}$. The base change to $E_2$ would be $\Sym^{k-1}(E_2^2) \ot_{E_2} \ind^G_B\Big(\chi_{1,E_2} \ot \chi_{2,E_2}|\cdot|^{-1}\Big)$. The argument in {\it Step 2} of case (2a) then implies 

\[\ind^G_B\Big(\chi_{1,E_2} \ot \chi_{2,E_2}|\cdot|^{-1}\Big) \simeq \ind^G_B\Big(\tau \ot (\sigma \circ \tau) |\cdot|^{-1}\Big)  \;.\]

\vskip8pt

By \cite[9.10]{BushnellHenniart_GL2} this implies 

\[\chi_{1,E_2} \ot \chi_{2,E_2}|\cdot|^{-1} = \tau \ot (\sigma \circ \tau) |\cdot|^{-1} \;\; \mbox{ or } \;\; \chi_{1,E_2} \ot \chi_{2,E_2}|\cdot|^{-1} = (\sigma \circ \tau) \ot \tau |\cdot|^{-1} \;.\]

\vskip8pt

Either equality would show that $\tau$ is $E$-valued, contradicting the preceding observation that $\tau$ is not $E$-valued. It follows that $\Pi^\lalg$ is not of principal series type.

\vskip8pt

Now we consider $\Pi^\la/\Pi^\lalg = \Pi^\la/\Pi^\SLlalg$, which is irreducible by \ref{Colmez-poids} (2). If this representation would be of principal series type, it would have to be an irreducible principal series, by \ref{dim-of-l-a-vectors} and \ref{JH-princseries}. Its base change to $E_2$ would be an irreducible principal series, cf. the proof of \ref{JH-princseries}. Since $\rho_{E_2}$ is trianguline with distinct Hodge-Tate weights, \cite[0.4 (ii), bullet points 3 and 4]{Colmez_poids}\footnote{In the statement of the fourth bullet point there is the number '4' missing.} shows that $\Pi(\rho_{E_2})^\la$ has length three or four. Hence $\Big[\Pi^\la/\Pi^\lalg\Big] \ot_E E_2$ is of length at least two. Therefore, $\Pi^\la/\Pi^\lalg$ cannot be of principal series type.
\end{proof}

\printbibliography

\end{document}